\numberwithin{equation}{section}
\newcommand{\R}{\mathbb{R}}
\newcommand{\p}{\partial}
\newtheorem{Theorem}{Theorem}[section]
\newtheorem{Definition}{Definition}[section]
\newtheorem{Proposition}{Proposition}[section]
\begin{document}

 \title {The Mean Field Kinetic Equation for Interacting Particle Systems with Non-Lipschitz Force}

\author { Qitao Yin\footnotemark[1]\, \quad Li Chen\footnotemark[1]  \, \quad Simone G\"ottlich\footnotemark[1] }

\footnotetext[1]{University of Mannheim, Department of Mathematics,
68131 Mannheim, Germany. E-mail: qyin@mail.uni-mannheim.de; \{chen, goettlich\}@math.uni-mannheim.de.}

 \maketitle

\begin{abstract}
In this paper, we prove the global existence of the weak solution to the mean field kinetic equation derived from the $N$-particle Newtonian system. For $L^1\cap L^\infty$ initial data, the solvability of the mean field kinetic equation can be obtained by using uniform estimates and compactness arguments while the difficulties arising from the non-local non-linear interaction are tackled appropriately using the Aubin-Lions compact embedding theorem.
\end{abstract}
{\bf Keywords:} mean field limit, weak solution, compact embedding.\\
{\bf AMS Classification:} 35Q83

\section{Introduction}

In this paper, we investigate a two-dimensional kinetic mean field equation for the mass distribution $f(t,x,v)$ 
with position $x \in \R^2$ and velocity $v \in \R^2$ given by
\begin{eqnarray}\label{meanfieldequation}
\label{VE}
\p_tf + v \cdot \nabla_xf +\nabla_v \cdot \left[(F*f)f\right]+\nabla_v \cdot (G f)=0. 
\end{eqnarray}

Equation \eqref{meanfieldequation} is motivated by several applications such as crowd dynamics \cite{HM95,Lewin51}
or material flow \cite{GKT15} and has been investigated from a numerical and theoretical point of view,
see for example \cite{Bellomo2011,Cristiani2011,Naldi2010} for a general overview. 
Further extensions might be behavioral models including group dynamics \cite{Bellomo2012},
minimal travel times \cite{DiFrancesco2011,Hughes2001} or evacuation scenarios \cite{Piccoli2009,Twarogowska2014}. 
Model hierarchies for pedestrian and material flow applications have been introduced in \cite{Degond2013,EGKT14,GHSSV14,GKT15}, 
where macroscopic equations are formally derived from a microscopic Newtonian system. 
Depending on the closure assumption, different non-local continuum models can occur, cf.~\cite{Colombo2008}.
However, from an analytical point of view, there are still open problems that need to be thoroughly investigated
as for instance the detailed derivation from the $N$-particle (pedestrian) Newtonian system to its mean field limit or Vlasov equation, see \cite{CGY16}.
Instead of the formal derivation with the help of the BBGKY hierarchy \cite{EGKT14, Sophn12}, the kinetic description
has been rigorously derived by a probabilistic method \cite{BP15,BH77,HJ07,HJ11,Philip2007,Sznitman91}.

In this paper, we now aim to prove the global existence of the weak solution
to the mean field kinetic equation \eqref{meanfieldequation}.
In the latter equation,  $F(x,v)$ denotes the total interaction force and has the similar structure as $\displaystyle \frac{x}{|x|}$, i.e.,
$$
F(x,v)=\nabla_x V(|x|,v)=\partial_rV(r,v)\frac{x}{|x|},
$$
where $V(|x|,v)$ is some (regular) potential.
More precisely, $F(x,v)$ can be a composition of the interaction force $F_{int}(x)$ and the dissipative force
$ F_{diss}(x,v)$, i.e.,
\begin{equation}
F(x,v)=(F_{int}(x)+F_{diss}(x,v))\mathcal{H}(x,v) \label{F}
\end{equation}
and $\mathcal{H}(x,v):=\mathcal{H}_{2R}(|x|) \cdot \widetilde{\mathcal{H}}_{2\widetilde{R}}(|v|)$,
where $\mathcal{H}_{2R}(|x|)$ and $ \widetilde{\mathcal{H}}_{2\widetilde{R}}(|v|)$
are smooth functions with compact support such that
$$
\mathcal{H}_{2R}(|x|)=\begin{cases}
0, & |x| >2R,\\
1, & |x| < R,
\end{cases}
\quad \hbox{and} \quad
 \widetilde{\mathcal{H}}_{2\widetilde{R}}(|v|)=\begin{cases}
 0, & |v| >2\widetilde{R},\\
 1, & |v| < \widetilde{R}.
\end{cases}
$$
  In order to cover a realistic behavior of moving crowds, the functions $\mathcal{H}_{2R}(|x|)$ and
	$\widetilde{\mathcal{H}}_{2\widetilde{R}}(|v|)$
	are used to express that the interaction force and the velocity of agents
  are of finite range. So the total force is considered on a bounded domain.

The other term $G(x,v)$ in equation \eqref{meanfieldequation} represents the desired velocity
and the direction acceleration and can be further written as
\begin{equation}\label{Gg}
G(x,v)=g(x)-v,
\end{equation}
where  $\displaystyle \|g\|_{L^{\infty}}$ is bounded by some constant.

Apparently, the proposed model equation \eqref{meanfieldequation} involves a singularity comparable to
the Coulomb potential in 2-$d$, resulting from the total interaction force.
That means this singularity, or in other words the non-local term, needs extra care in the final limiting process.
For more information about the Coulomb potential and the Vlasov-Poisson system, we refer to \cite{Pfaffelmoser92,Rein98, Schaeffer91}.

We now briefly explain our approach to obtain the existence of the weak solution. 
First, we consider an approximate problem (kinetic equation with cut-off) and show that the approximate problem has a weak solution,
where the mean field characteristic flow is of great importance.
Unlike the 3-$d$ Vlasov-Poisson equation \cite{Dobrushin79, LP91}, the non-local operator in \eqref{VE} cannot be decoupled into
an elliptic equation. Hence, the Calder\'{o}n-Zygmund continuity theorem \cite{Golse13} for second order elliptic equations is not applicable in this case and
we have to find an alternative way to fix the desired compactness arguments.
The idea is to use the Aubin-Lions lemma \cite{CJL14,Simon86} and to argue that
due to that compact embedding theorem, we are able to pass the limit especially in the non-local term. 
We also remark that the result obtained in the present paper plays a crucial role in the proof of the rigorous derivation of the mean field equation
in \cite{CGY16}.

This article is organized as follows: In Section~\ref{sec:mean}, we state our main result and further introduce
some notations and preliminary work to
show that the characteristic flow associated with the cut-off mean field equation admits a unique solution.
We also prove the existence and uniqueness of the weak solution to the cut-off mean field equation.
Section~\ref{sec:compact} is concerned with the compactness arguments that are needed to pass the limit and to obtain the desired weak formulation of the non-cut-off kinetic equation. However, the corresponding uniqueness can no longer be kept during the limiting procedure. Finally, we summarize our results.

\section{Mean Field Equation with Cut-off}  \label{sec:mean}
We start with the definition of a weak solution to the mean field equation \eqref{meanfieldequation}.
\begin{Definition}
	  Let $f_0(x,v) \in L^1(\R^2 \times \R^2) \cap L^{\infty}(\R^2 \times \R^2)$. A function $f=f(t,x,v)$ is said to be a weak solution to the kinetic mean field equation \eqref{VE} with initial data $f_0$, if there holds
\begin{eqnarray}
 &&\iint_{\R^2 \times \R^2} f(t,x,v) \varphi(x,v)
  \,dxdv = \iint_{\R^2 \times \R^2}f_0(x,v) \varphi(x,v)
  \,dxdv \nonumber \\
  && \hspace{16mm} +\int_0^t \iint_{\R^2 \times \R^2}vf(s,x,v) \cdot\nabla_x \varphi(x,v)
  \,dxdvds  \nonumber \\
  && \hspace{16mm}+\int_0^t \iint_{\R^2 \times \R^2} \left(F(x,v)*f(s,x,v)\right) f(s,x,v)\cdot\nabla_v \varphi(x,v)\,dxdvds  \nonumber \\
    &&\hspace{16mm}+ \int_0^t \iint_{\R^2 \times \R^2}G(x,v) f(s,x,v)\cdot \nabla_v \varphi(x,v) \,dxdvds
  \end{eqnarray}
for all $\varphi(x,v) \in C^{\infty}_0(\R^2 \times \R^2)$ and $t \in \R_+$.
\end{Definition}

Next, we present the main theorem of this paper. In the following, $G(x,v)$ is given by \eqref{Gg} while $F(x,v)$ is defined by \eqref{F}.

\begin{Theorem}\label{thm:1}
 For $F(x,v)=\nabla_x V(|x|,v)=\partial_rV(r,v)\frac{x}{|x|}$ and $G(x,v)=g(x)-v$, assume that $\partial_rV(r,v),\nabla_v\partial_rV(r,v)\in L^\infty(\R^2 \times \R^2)$ and $g\in L^\infty(\R^2 \times \R^2)$. Let $f_0(x,v)$ be a nonnegative function in $ L^1(\R^2 \times \R^2) \cap L^{\infty}(\R^2 \times \R^2)$, $|x|^2f_0(x,v) \in L^1(\R^2 \times \R^2)$, and
$$
\iint_{\R^{2}\times \R^{2}}\frac{1}{2}|v|^{2}f_{0}(x,v)\,dxdv=: \mathcal{E}_0<\infty.
$$
Then, there exists a weak solution $f \in L^{\infty}(\R_{+};L^{1}(\R^{2} \times \R^{2}))$ to the mean field equation \eqref{VE} with initial data $f_0$. Moreover this solution satisfies
\begin{equation} \label{maximum}
  0 \leq f(t,x,v) \leq \Vert f_0 \Vert_{L^{\infty}(\R^{2}\times \R^{2})}e^{Ct}, \quad \hbox{for a.e.}  \,(x,v) \in \R^{2} \times \R^{2}, \,  t \geq 0
\end{equation}
together with the mass conservation
\begin{equation} \label{mass}
    \iint_{\R^{2}\times \R^{2}}f(t,x,v) \,dxdv = \iint_{\R^{2}\times \R^{2}}f_{0}(x, v) \,dxdv=: \mathcal{M}_0
\end{equation}
and the kinetic energy  bound
\begin{equation} \label{kinetic}
   \mathcal{E}(t):=\iint_{\R^{2}\times \R^{2}}\frac{1}{2}|v|^{2}f(t, x, v)\,dxdv \leq C, \quad \forall \, t \geq 0,
\end{equation}
where the constant $C$ is independent of $t$. 
\end{Theorem}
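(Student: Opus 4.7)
My plan is to construct the weak solution via a Picard-type iteration built on the mean-field characteristic flow, taking full advantage of the cut-offs $\mathcal{H}_{2R}$ and $\widetilde{\mathcal{H}}_{2\widetilde R}$ that force $F$ to be bounded and compactly supported in $(x,v)$. The non-local term $F*f$ will then inherit enough regularity from $f\in L^1\cap L^\infty$ that the associated ODE system for the characteristics is classically well-posed, and the push-forward of $f_0$ along these characteristics will give the next iterate, whose limit furnishes the desired solution.

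For a fixed $T>0$ and any candidate density $f\in L^\infty(0,T;L^1\cap L^\infty)$ with $\|f(t)\|_{L^1}\leq \mathcal{M}_0$, I would first introduce the mean-field driving field
$$E[f](t,x,v):=(F*f)(t,x,v)+g(x)-v.$$
Since $F$ and $\nabla_v F$ are bounded and compactly supported, $|E[f]|\leq \|F\|_\infty\mathcal{M}_0+\|g\|_\infty+|v|$ and $E[f]$ is Lipschitz in $(x,v)$, the Lipschitz constant being controlled by $\|\nabla_v F\|_\infty\mathcal{M}_0+1$ together with a contribution from $\nabla_x(F*f)$. The latter is the delicate spot, but the $x/|x|$-type singularity is locally integrable in $\R^2$, so convolution with $f\in L^1\cap L^\infty$ still produces a Lipschitz function; this is where the two-dimensional assumption enters naturally. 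Hence $\dot X=V,\,\dot V=E[f](t,X,V)$ with data $(x,v)$ admits a unique global $C^1$-flow $\Phi^f_t$, and I set $\mathcal{T}(f)(t,\cdot,\cdot):=f_0\circ(\Phi^f_t)^{-1}$.

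Along the iteration $f^{n+1}=\mathcal{T}(f^n)$ starting from $f^0\equiv f_0$, I would derive the three uniform estimates in turn. Mass conservation (\ref{mass}) follows from the change-of-variables formula. The $L^\infty$ bound (\ref{maximum}) comes from the identity $\tfrac{d}{dt}f^{n+1}=-(\nabla_v\cdot E[f^n])\,f^{n+1}$ along characteristics, combined with $|\nabla_v\cdot E[f^n]|\leq 2+\|\nabla_v F\|_\infty\mathcal{M}_0$ and Gr\"onwall. For (\ref{kinetic}), testing the transport equation against $|v|^2/2$ and integrating by parts yields
$$\tfrac{d}{dt}\mathcal{E}^{n+1}(t)=\iint (F*f^n)\cdot v\,f^{n+1}\,dxdv+\iint g(x)\cdot v\,f^{n+1}\,dxdv-2\mathcal{E}^{n+1}(t),$$
and the first two terms are controlled by $C\sqrt{\mathcal{E}^{n+1}}$ via Cauchy--Schwarz (using $\|F\|_\infty, \|g\|_\infty<\infty$ and mass conservation); the dissipative $-2\mathcal{E}^{n+1}$ absorbs them and produces a $t$-uniform bound depending only on $\mathcal{E}_0$ and $\mathcal{M}_0$.

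Finally, I would close the iteration by showing $\{f^n\}$ is Cauchy in $C([0,T];L^1)$: the difference $f^{n+1}-f^n$ in $L^1$ is dominated by $\sup_{(x,v)}\|\Phi^{f^n}_t-\Phi^{f^{n-1}}_t\|$, which in turn is controlled by $\int_0^t\|f^n-f^{n-1}\|_{L^1}\,ds$ through a Gr\"onwall estimate on the characteristic ODE, giving contraction for small $T$ and then extension by iteration. With a strong $L^1$-limit $f$, passing to the limit in the weak formulation is routine for the linear terms; for the nonlinear term $(F*f^n)\,f^n$, strong $L^1_{\text{loc}}$ convergence of $f^n$ combined with $\|F\|_\infty<\infty$ yields uniform convergence of $F*f^n$ on compact sets, which combines with the uniform $L^\infty$ bound on $f^n$ to identify the product in the limit. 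Properties (\ref{maximum})--(\ref{kinetic}) then pass to the limit by lower semicontinuity. The main obstacle I anticipate is the quantitative Lipschitz dependence of $F*f$ on $f$ that makes the contraction work, since $F$ itself is merely bounded near the origin; this is rescued precisely by the two-dimensional integrability of the $x/|x|$-singularity, without which the iteration would not close.
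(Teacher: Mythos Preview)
Your plan has a genuine gap: the self-generating term $g$ is assumed only to lie in $L^\infty$, with no Lipschitz regularity whatsoever. Consequently the characteristic system $\dot X=V,\ \dot V=(F*f)(t,X,V)+g(X)-V$ is \emph{not} classically well-posed, because $x\mapsto g(x)$ need not be even continuous. When you list the contributions to the Lipschitz constant of $E[f]$---``$\|\nabla_vF\|_\infty\mathcal M_0+1$ together with a contribution from $\nabla_x(F*f)$''---you account for everything except $g$, which is precisely the term that fails. The paper flags this explicitly (``$G(x,v)$ is not Lipschitz continuous, which requires an additional work of mollification'') and handles it by replacing $g$ with $j_{1/N}*g$ in the approximate problem; the resulting Lipschitz constant is $O(N)$, so one cannot close any contraction uniformly and must instead pass to the limit $N\to\infty$ by compactness (Aubin--Lions on $F^N*f^N$), which is the heart of Section~\ref{sec:compact}.

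By contrast, your observation about $F*f$ is essentially correct and is a genuine simplification relative to the paper's treatment of the interaction force: since $\nabla_xF\sim 1/|x|$ on a compact set and $1/|x|\in L^1_{\rm loc}(\R^2)$, one has $\|\nabla_x(F*f)\|_\infty\le \|\nabla_xF\|_{L^1}\|f\|_\infty<\infty$. The paper cuts off $F$ to $F^N$ because its mean-field flow proposition requires the kernel $K(z,z')$ to be Lipschitz in $z'$, i.e.\ $F$ itself Lipschitz; your linearized Picard scheme only needs $F*f^n$ Lipschitz and so avoids that particular regularization. But this does not rescue the argument, since you cannot avoid regularizing $g$.

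There is also a secondary issue in your contraction step: the estimate $\|f^{n+1}-f^n\|_{L^1}\lesssim\sup|\Phi^{f^n}_t-\Phi^{f^{n-1}}_t|$ is false for $f_0$ merely in $L^1\cap L^\infty$; push-forwards under uniformly close flows need not be close in $L^1$ without extra regularity of $f_0$ (e.g.\ BV). The standard fix is to run the contraction in a Wasserstein metric, but that in turn requires the kernel $F$ (not just $F*f$) to be Lipschitz in the integration variable, which brings back the need for a cut-off of $F$ as well.
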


Under the assumptions above, the interaction force is bounded but not Lipschitz continuous in $x$. We need to use the standard cut-off to overcome this difficulty. Another difficulty in this context is that the interaction force $F(x,v)$ not only depends on the position $x$ but also on the velocity $v$.
This leads to a totally different structure compared to the Vlasov-Poisson equation, where the $W^{2,p}$ theory for Poisson equations is generally used.
The proof of Theorem \ref{thm:1} is therefore not as straightforward and intuitive as expected and therefore needs to be dedicately handled step by step within the next sections. On the other hand, the self-generating force (or desired velocity and direction acceleration) $G(x,v)$ is not Lipschitz continuous, which requires an additional work of mollification.




We briefly recall essential assumptions and properties, cf. \cite{CGY16},
that are necessary for the existence proof.

\subsection{Notations and Preliminary Work} \label{sec:prelim}

We consider the flow with cut-off of order $N^{-\theta}$ with arbitrary positive $\displaystyle  \theta $, i.e.,
\begin{equation}\label{FN}
F^N(x,v)=\begin{cases}
\displaystyle V'(|x|,v)\frac{x}{|x|} \mathcal{H}(x,v), & |x| \ge N^{-\theta}, \\[5mm]
\displaystyle N^{\theta}V'(|x|,v)x \mathcal{H}(x,v), & |x| < N^{-\theta}. \end{cases}
\end{equation}
Then, the mean field cut-off equation becomes
 \begin{eqnarray} \label{vlasovN}
      \p_t f^N+v \cdot \nabla_x f^N+\nabla_v \cdot [(F^N*f^N) f^N]+\nabla_v \cdot (G^N
      f^N)=0,
    \end{eqnarray}
where we also take the cut-off of $G(x,v)$ into consideration, i.e.,
\begin{eqnarray*}
	G^N(x,v)=j_{\frac1N}*g(x)-v
\end{eqnarray*} 
with $\displaystyle j_{\frac1N}(x)$ being the standard mollifier.
%
%


We also point out several properties for the interaction force $F^N(x,v)$ and the acceleration $G^N(x,v)$, namely
  \begin{itemize}
    \item[(a)] $F^N(x,v)$ is bounded, i.e., $|F^N(x,v)| \le C$.
    \item[(b)] $F^N(x,v)$ satisfies
    $$|F^N(x,v)-F^N(y,v)| \le q^N(x,v) |x-y|,$$
   where $q^N$ has compact support in $B_{2R} \times B_{2\widetilde{R}}$ with
            $$
            q^N(x,v):=
              \begin{cases} \displaystyle
                C \cdot \frac{1}{|x|}+C, & |x|\ge N^{-\theta}, \\[3mm]
                C \cdot N^{\theta}, & |x|< N^{-\theta}.
              \end{cases}
              $$
    \item[(c)] $\nabla_v F^N(x,v)$ is uniformly bounded in $N$.
    \item[(d)] $|G^N(x,v)-G^N(y,v)|\leq C \cdot N \cdot |x-y|$.
  \end{itemize}
	
  Here, we use $C$ as a universal constant that might depend on all the given constants $k_n, R, \widetilde{R}, \gamma_n, \gamma_{t}.$

Furthermore, if there is a singularity in the velocity $v$ in the interaction
  potential similar to property (b),
it can be treated by using the same method as above and the results also
apply.

\subsection{Mean Field Characteristic Flow with Cut-off}

Before we start to prove the existence of the unique weak solution to the equation \eqref{vlasovN}, we need first the following definition.
\begin{Definition}
	Let  $(X_1, \Sigma_1)$ and $(X_2, \Sigma_2)$ be measurable spaces (meaning that $\Sigma_1$ and $\Sigma_2$ are $\sigma$-algebras of the subsets of $X_1$ and $X_2$, respectively). Let $T: X_1 \rightarrow X_2$  be a $(\Sigma_1,\Sigma_2)$-measurable map and $\mu$ be a positive measure on $(X_1, \Sigma_1)$. Then, the formula
	$$
	\nu (B) := \mu (T^{-1}(B)), \quad \forall \,B \in  \Sigma_2
	$$
	defines a positive measure on $(X_2, \Sigma_2)$, denoted by
	$$
	\nu =: T\#\mu,
	$$
	and is referred to as the push-forward of the measure $\mu$ under the map $T$.
\end{Definition}

The definition is often used when it comes to solving mean field characteristic flow. For more detailed information, we refer to \cite{Golse13}.
Due to the property of the transport equation, we know that solving the equation \eqref{vlasovN} is equivalent to investigating the corresponding characteristic system, i.e.,
	 \begin{eqnarray}
 \begin{cases}  \label{meanfieldZ}
 \displaystyle \frac{d}{dt}Z(t, z_0, \mu_0)=\int_{\R^4}K\left(Z(t, z_0),z'\right) \mu(t,dz'), \\
 Z(0, z_0, \mu_0)=z_0,
 \end{cases}
\end{eqnarray}
where
$$
K^N(z,z')=K^N(x,v,x',v'):=\left(v, F^N(x-x',v-v')+G^N(x,v) \right)
$$
and $ \mu(t, \cdot)$ is the push-forward of the measure $\mu_0$. Here, for the sake of convenience, we use $z=(x,v)$ and $Z$ as the four-dimensional vector.

We denote $\mathcal{P}(\R^4) $ as the set of Borel probability measures on $\R^4$ and $\mathcal{P}_1(\R^4)$ is defined by 
	$$
	\mathcal{P}_1(\R^4):=\Big\{  \mu \in \mathcal{P}(\R^4) \, \Big| \, \int_{\R^4} |v| \mu(dx,dv) < \infty \Big \}.
	$$	
	
	\begin{Proposition} \label{prop3.1}
	Assume that the interaction kernel $K(z,z') \in C(\R^4 \times \R^4; \R^4)$ is Lipschitz continuous in $z$, uniformly in $z'$ (and conversely), i.e., there exists a constant $L>0$ such that
\begin{eqnarray*}
	&&\sup_{z' \in \R^4} |K(z_1,z')-K(z_2,z')| \le L|z_1-z_2|,\\
	&&\sup_{z \in \R^4} |K(z,z_1)-K(z,z_2)| \le L|z_1-z_2|.
\end{eqnarray*}
	For any given $z_0=(x_0,v_0) \in \R^2\times \R^2$ and Borel probability measure $\mu_0 \in \mathcal{P}_1(\R^4)$, there exists a unique $C^1$-solution, denoted by
	$$
	\R_+ \ni t \mapsto Z(t, z_0, \mu_0) \in \R^4,
	$$
to the problem
	 \begin{eqnarray}
 \begin{cases}  \label{meanfieldZ}
 \displaystyle \frac{d}{dt}Z(t, z_0, \mu_0)=\int_{\R^4}K\left(Z(t, z_0),z'\right) \mu(t,dz'), \\
 Z(0, z_0, \mu_0)=z_0,
 \end{cases}
\end{eqnarray}
	where $ \mu(t, \cdot)$ is the push-forward of the measure $\mu_0$, i.e., $\mu(t, \cdot)=Z(t, \cdot, \mu_0)\#\mu_0$.
\end{Proposition}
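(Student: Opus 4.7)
\textbf{Proof plan for Proposition \ref{prop3.1}.} The strategy is to recast the coupled flow--measure system as a fixed point equation for the curve $t \mapsto \mu(t) \in \mathcal{P}_1(\R^4)$, solve it by contraction on a short time interval using the $1$-Wasserstein metric $W_1$, and then extend globally. For a given candidate curve $\nu \in C([0,T];(\mathcal{P}_1,W_1))$, the frozen vector field
\[
b_\nu(t, z) := \int_{\R^4} K(z, z')\, \nu(t, dz')
\]
is $L$-Lipschitz in $z$ uniformly in $t$ by the first Lipschitz hypothesis on $K$ (and continuous in $t$ by Kantorovich--Rubinstein duality applied to the $W_1$-continuous curve $\nu$). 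The classical Cauchy--Lipschitz theorem then produces a unique global $C^1$ solution $Z^\nu(t, z_0)$ of $\dot{Z} = b_\nu(t, Z)$, $Z(0) = z_0$, for every $z_0 \in \R^4$. I would then define the self-map $\Phi(\nu)(t) := Z^\nu(t, \cdot)\#\mu_0$, so that a fixed point $\mu = \Phi(\mu)$ is precisely a solution of the characteristic system in the proposition.

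\emph{Contraction step.} Given two candidates $\nu_1, \nu_2$, I would split the discrepancy of driving fields as
\[
b_{\nu_1}(s, Z^{\nu_1}) - b_{\nu_2}(s, Z^{\nu_2}) = \bigl[b_{\nu_1}(s, Z^{\nu_1}) - b_{\nu_1}(s, Z^{\nu_2})\bigr] + \bigl[b_{\nu_1}(s, Z^{\nu_2}) - b_{\nu_2}(s, Z^{\nu_2})\bigr].
\]
The first bracket is at most $L\,|Z^{\nu_1} - Z^{\nu_2}|$ by the first Lipschitz property, while the second equals $\int_{\R^4} K(Z^{\nu_2}, z')\,(\nu_1 - \nu_2)(s, dz')$, whose modulus is bounded by $L\, W_1(\nu_1(s), \nu_2(s))$ via Kantorovich--Rubinstein duality, since $K(z,\cdot)$ is $L$-Lipschitz by the second assumption. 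A Gronwall argument then gives
\[
\sup_{z_0 \in \R^4} |Z^{\nu_1}(t, z_0) - Z^{\nu_2}(t, z_0)| \le L\, t\, e^{L t} \sup_{s \in [0,t]} W_1(\nu_1(s), \nu_2(s)).
\]
Combined with the elementary bound $W_1(T_1 \# \mu_0, T_2 \# \mu_0) \le \|T_1 - T_2\|_{L^\infty}$, this shows that $\Phi$ is a strict contraction on $C([0,\tau];(\mathcal{P}_1,W_1))$ for $\tau$ small depending only on $L$, and Banach's fixed point theorem delivers a unique fixed point there.

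\emph{Globalization and main obstacle.} Since the contraction window $\tau$ depends only on $L$, I would iterate on consecutive intervals $[k\tau,(k+1)\tau]$ to produce a global solution on $\R_+$; the $C^1$ regularity of $t \mapsto Z(t, z_0, \mu_0)$ is immediate from the ODE once $\mu$ is known, and uniqueness on any interval follows from the same Gronwall estimate applied to two candidate solutions sharing $\mu_0$. That $\Phi$ preserves $\mathcal{P}_1$ is a routine check using the linear growth of $K$ (inherited from Lipschitz continuity) together with Gronwall on $|Z(t, z_0, \mu_0)|$. The main conceptual obstacle is the coupling between the flow and its driving measure: the measure drives the ODE whose flow in turn defines the measure. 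This loop is broken precisely by the \emph{second} Lipschitz assumption in $z'$, which through Kantorovich--Rubinstein duality converts a $W_1$ discrepancy between driving curves into a pointwise bound on the right-hand sides; without this second hypothesis the contraction argument could not be closed.
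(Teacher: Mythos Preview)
Your proposal is correct and follows precisely the standard Banach fixed-point approach in the $1$-Wasserstein metric that the paper invokes; indeed the paper does not give a detailed argument but simply states that the result ``is typically obtained via the standard argument using Banach Fixed-Point Theorem'' and cites Golse's lecture notes \cite{Golse13}. Your sketch faithfully reproduces that standard argument, so there is nothing to correct or contrast.
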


This proposition is typically obtained via the standard argument using Banach Fixed-Point Theorem, see \cite{Golse13}.

With Proposition \ref{prop3.1}, we are now able to prove that there exists a unique weak solution to the Vlasov equation with cut-off \eqref{vlasovN}.

\begin{Theorem}\label{thmexistN} Let $F$ and $G$ satisfy the same assumptions as in Theorem \ref{thm:1} and $f^N_0$ be a nonnegative compactly supported function in $ L^1(\R^2 \times \R^2) \cap L^{\infty}(\R^2 \times \R^2)$ 
	   satisfying
$$
\|f^N_0\|_{L^1(\R^{2}\times \R^{2})}= \mathcal M_0 \quad\mbox{ and }\quad f^N_0(x,v) \leq \|f_0 \|_{L^{\infty}(\R^{2}\times \R^{2})},
$$
$$
\iint_{\R^{2}\times \R^{2}}\frac{1}{2}|v|^{2}f^N_{0}(x,v)\,dxdv \le  \mathcal{E}_0<\infty,
$$
 and
$$
\iint_{\R^{2}\times \R^{2}}\frac{1}{2}|x|^{2}f^N_{0}(x,v)\,dxdv \le  \mathcal{M}_2<\infty.
$$
Then, there exists a unique weak solution $f^N \in C^1(\R_{+};L^{1}(\R^{2} \times \R^{2}))$ to the mean field cut-off equation \eqref{vlasovN} with initial data $f^N_0$, i.e., $f^N(t,x,v)$ satisfies
\begin{eqnarray}
 &&\iint_{\R^2 \times \R^2} \p_t f^N(t,x,v) \varphi(x,v)
  \,dxdv =  \iint_{\R^2 \times \R^2}vf^N(t,x,v) \cdot\nabla_x \varphi(x,v)
  \,dxdv  \nonumber \\
  && \hspace{16mm}+ \iint_{\R^2 \times \R^2} \left(F^N(x,v)*f^N(t,x,v)\right) f^N(s,x,v)\cdot\nabla_v \varphi(x,v)\,dxdv  \nonumber \\
    &&\hspace{16mm}+ \iint_{\R^2 \times \R^2}G^N(x,v) f^N(t,x,v)\cdot \nabla_v \varphi(x,v) \,dxdv
  \end{eqnarray}
for all $\varphi(x,v) \in C^{\infty}_0(\R^2 \times \R^2).$
Moreover this solution satisfies
$$
\lim_{t \rightarrow 0}f^N(t,x,v)=f_0^N(x,v),\quad \hbox{for a.e.}  \,(x,v) \in \R^{2} \times \R^{2},
$$
\begin{equation} \label{maximumN}
  0 \leq f^N(t,x,v) \leq \|f^N_0 \|_{L^{\infty}(\R^{2}\times \R^{2})}e^{Ct}, \quad \hbox{for a.e.}  \,(x,v) \in \R^{2} \times \R^{2}, \,  t \geq 0
\end{equation}
together with the mass conservation
\begin{equation} \label{massN}
    \iint_{\R^{2}\times \R^{2}}f^N(t,x,v) \,dxdv=\iint_{\R^{2}\times \R^{2}}f^N_{0}(x, v) \,dxdv =:  \mathcal{M}_0,
\end{equation}
the kinetic energy  bound
\begin{equation} \label{kineticN}
  \iint_{\R^{2}\times \R^{2}}\frac{1}{2}|v|^{2}f^N(t, x, v)\,dxdv \leq C, \quad \forall \, t \geq 0,
\end{equation}
and the bound of second moment
\begin{equation} \label{2momentN}
  \iint_{\R^{2}\times \R^{2}}\frac{1}{2}|x|^{2}f^N(t, x, v)\,dxdv \leq \mathcal{M}_2e^{Ct}, \quad \forall \, t \geq 0,
\end{equation}
where the constant $C$ is independent of $N$ and $t$.
 \end{Theorem}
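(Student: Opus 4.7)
The plan is to construct $f^N$ via the mean-field characteristic flow provided by Proposition~\ref{prop3.1}, and then to read off every assertion of the theorem from the push-forward representation. As a first step I would verify the hypotheses of Proposition~\ref{prop3.1} for the kernel
$$K^N(z,z'):=\bigl(v,\,F^N(x-x',v-v')+G^N(x,v)\bigr).$$
The first component is trivially Lipschitz; for the second, property~(b) with $q^N\le C N^\theta$ handles the $x$-variable of $F^N$, property~(c) handles the $v$-variable of $F^N$, and property~(d) together with the trivial Lipschitz dependence of $-v$ handles $G^N$. The resulting Lipschitz constant $L_N$ depends on $N$ but is finite, which is all Proposition~\ref{prop3.1} requires. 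Applying it with $\mu_0=f^N_0(z)\,dz\in\mathcal{P}_1(\R^4)$ produces a unique $C^1$-flow $Z^N(t,\cdot,\mu_0)$, and I set $\mu^N(t,\cdot):=Z^N(t,\cdot)\#(f^N_0\,dz)$.

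Because $K^N$ is $C^1$ and Lipschitz, $Z^N(t,\cdot)$ is a $C^1$-diffeomorphism, so $\mu^N(t,\cdot)$ has a density $f^N(t,z)$ given by the change-of-variables formula
\begin{equation*}
f^N(t,z)=\frac{f^N_0(Z^N(-t,z))}{J^N(t,Z^N(-t,z))},\qquad J^N(t,z_0)=\exp\!\left(\int_0^t (\mathrm{div}_z V^N)(s,Z^N(s,z_0))\,ds\right),
\end{equation*}
where $V^N(s,z)=(v,(F^N*f^N)(s,z)+G^N(z))$. Splitting the divergence as $\nabla_v\cdot(F^N*f^N)+\nabla_v\cdot G^N$, the second term equals $-2$, while by property~(c) and the conservation $\|f^N(s)\|_{L^1}=\mathcal{M}_0$ inherited from the push-forward, the first is bounded by a constant \emph{independent of $N$}. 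This produces nonnegativity and the $L^\infty$-bound \eqref{maximumN}; mass conservation \eqref{massN} is automatic from the push-forward. The weak formulation is obtained by differentiating $\int\varphi(Z^N(t,z_0))f^N_0(z_0)\,dz_0$ in $t$, using the characteristic ODE and the chain rule, and reverting to $(x,v)$ coordinates; $C^1$-regularity in $t$ of $f^N$ as an $L^1$-valued map follows from the smoothness of the flow.

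For the moment estimates I would test the weak equation formally against $|v|^2/2$ and $|x|^2/2$, justified rigorously via smooth truncations. The energy identity takes the schematic form
$$\frac{d}{dt}\mathcal E^N(t)=-2\mathcal E^N(t)+\iint v\cdot(F^N*f^N)f^N\,dxdv+\iint v\cdot g\,f^N\,dxdv,$$
and since $F^N*f^N$ is bounded by $\|F^N\|_{L^\infty}\mathcal{M}_0$, Cauchy--Schwarz gives a bound of the form $C\sqrt{\mathcal E^N\mathcal M_0}$ on the last two terms, yielding \eqref{kineticN} uniformly in $t$ by Gronwall (the $-2\mathcal E^N$ term is dissipative). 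The inequality $\frac{d}{dt}\iint|x|^2 f^N\,dxdv\le C\iint(|x|^2+|v|^2)f^N\,dxdv$, combined with the energy bound, yields \eqref{2momentN}.

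Uniqueness would follow from the Lipschitz dependence of the characteristic flow on the measure: if $f^N_1,f^N_2$ are two weak solutions with characteristics $Z^N_1,Z^N_2$, a Gronwall argument on $W_1(\mu^N_1(t),\mu^N_2(t))$ using the $N$-dependent Lipschitz constant $L_N$ of $K^N$ forces $\mu^N_1=\mu^N_2$, as in the standard Dobrushin-type stability proof (see \cite{Golse13}). The main obstacle I expect is the bookkeeping for the constant in \eqref{maximumN}, which must be independent of $N$: the naive $x$-Lipschitz estimate of $F^N$ blows up as $N^\theta$, so property~(c) (uniform-in-$N$ bound on $\nabla_v F^N$) is indispensable when controlling $\mathrm{div}_z V^N$, and it is precisely at this point that the structure $F^N=\partial_rV\,x/|x|\,\mathcal{H}$ with $\nabla_v\partial_rV\in L^\infty$ is used.
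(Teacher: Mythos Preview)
Your proposal is correct and follows essentially the same route as the paper: construct $f^N$ as the push-forward under the mean-field characteristic flow from Proposition~\ref{prop3.1}, read off the $L^\infty$-bound \eqref{maximumN} from the Jacobian formula using the uniform-in-$N$ bound on $\nabla_v F^N$ together with mass conservation, and derive the kinetic energy bound by testing against $|v|^2$ (with smooth truncations) and exploiting the damping from the $-v$ part of $G^N$. The only cosmetic differences are that the paper uses Young's inequality with an $\varepsilon$ where you invoke Cauchy--Schwarz, spells out the truncation via explicit cutoffs $\varphi_\eta(x)\phi_\eta(v)$ and a five-term decomposition $I_1,\ldots,I_5$, and treats uniqueness as an immediate byproduct of Proposition~\ref{prop3.1} rather than via a separate Wasserstein stability argument.
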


\begin{proof} Without loss of generality, we assume that $\mathcal M_0=1$.
	If we choose the interaction kernel $K$ as
	$$
	K^N(z,z')=K^N(x,v,x',v'):=\left(v, F^N(x-x',v-v')+G^N(x,v) \right),
	$$
	the mean field cut-off equation \eqref{vlasovN} can be put into the form
	$$
\partial_{t}f^N(t, z)+\hbox{div} _{z}\Big(f^N(t, z)\iint_{\R^{2}\times \R^{2}}K^N(z, z')f^N(t, z')dz' \Big)=0.
$$
Notice that the non-linear non-local dynamical system that appears in Proposition \ref{prop3.1} is exactly the equation of characteristics for the mean field kinetic equation with cut-off \eqref{vlasovN}, which we refer to as the mean field characteristic flow (with cut-off).
The existence and uniqueness of the solution to \eqref{vlasovN} are therefore achieved as a direct result of the construction of the mean field characteristic flow. By Proposition \ref{prop3.1}, there exists a unique map
$$
\R_{+}\times \R^{4}\times \mathcal{P}_{1}(\R^{4})\ni(t, z_0, \mu_0)\mapsto Z^N(t, z_0, \mu_0)\in \R^{4}
$$
such that $t \mapsto Z^N(t, z_0, \mu_0)$ is the integral curve of the vector field
$$
z \mapsto \iint_{\R^2 \times \R^2} K^N(z, z')\mu^N(t, dz') 
$$
passing through $z_0$ at time $t=0$, where $\mu^N(t) :=Z^N(t, \cdot, \mu_0)\#\mu_0.$
For the given initial data $f^N_0$, letting $d\mu_0=f^N_0dz$ results in
$$
f^N(t, z):=f^N_0\left(Z^N(t, \cdot)^{-1}(z)\right)J(0,t,z),\quad \forall \, t\geq 0,
$$
where $J(0,t,z)$ is the Jacobian, i.e.,
$$
J(0,t,z)=\exp{\left( \int_t^0 \hbox{div}_v \left(F^N\ast f^N(s,Z^N(s,z)) +G^N(Z^N(s,z))\right)\,ds \right)}.
$$
Then we have
\begin{eqnarray*}
	|f^N(t, z)| &\leq & |f^N_0\left(Z^N(t, \cdot)^{-1}(z)\right)J(0,t,z)|\\
					&\leq & \|f^N_0 \|_{L^{\infty}(\R^{2}\times \R^{2})}\exp{\left( \int_0^t \| \nabla_v F^N\ast f^N \|_{L^{\infty}(\R^{2}\times \R^{2})} \,ds +Ct\right)} \\
					&\leq &  \|f^N_0 \|_{L^{\infty}(\R^{2}\times \R^{2})}\exp{\left(\int_0^t \| \nabla_v F^N\|_{L^{\infty}(\R^{2}\times \R^{2})} \|f^N \|_{L^{1}(\R^{2}\times \R^{2})}  \,ds +Ct\right)} \\
					&\leq &  \|f^N_0 \|_{L^{\infty}(\R^{2}\times \R^{2})} e^{Ct},
\end{eqnarray*}
where we have used the property of the acceleration $G^N(x,v)$, i.e., $\displaystyle G^N(x,v)=j_{\frac1N}*g(x)-v$, where $\displaystyle  j_{\frac1N}*g(x)$ is a $L^\infty$-function.
From the equation, \eqref{massN}
are straightforward. Property \eqref{kineticN} is left to be proven. For the kinetic energy estimate, we will again use the property of the acceleration $G^N(x,v)$ and remark that $v$ in $G^N(x,v)$ is critical in the estimate because it serves as a damping term. We now choose $\{\varphi_{\eta}(x)\phi_{\eta}(v)\}$ to be a  smooth function which satisfies
$$
\varphi_{\eta}(x)=\begin{cases}
0, & |x| > \frac{1}{\eta},\\
1, & |x| <  \frac{1}{2\eta},
\end{cases}
\quad \hbox{and} \quad
\phi_{\eta}(v)=\begin{cases}
0, & |v| >\frac{1}{\eta},\\
1, & |v| < \frac{1}{2\eta},
\end{cases}
$$
and
$$
\Big|\nabla_z \Big(\varphi_{\eta}(x)\phi_{\eta}(v)\Big)\Big| \le \eta \Big|\varphi_{\eta}(x)\phi_{\eta}(v)\Big|.
$$
Since $\varphi_{\eta}(x)\phi_{\eta}(v)$ is monotone and converges to one for almost all $x$ and $v$ as $\eta$ goes to zero, we have
$$ \iint_{\R^2 \times \R^2}v^2f^N(t,x,v)\varphi_{\eta}(x)\phi_{\eta}(v)\,dxdv  \rightarrow \iint_{\R^2 \times \R^2}v^2f^N(t,x,v)\,dxdv, \quad \hbox{as} \, \eta \rightarrow 0.$$
The compact support of $f^N_0$ implies that $f^N(t,x,v)$ has compact support in $(x,v)$ for any fixed time $t$.
By the definition of weak solution for test functions $ v^2 \varphi_{\eta}(x)\phi_{\eta}(v) $, we have
  \begin{eqnarray*}
&&\frac{d}{dt}\iint_{\R^2 \times \R^2} \frac{1}{2}v^2f^N(t,x,v) \varphi_{\eta}(x)\phi_{\eta}(v)
  \,dxdv \\
  &=& \frac{1}{2}\iint_{\R^2 \times \R^2}vf^N(t,x,v) \cdot\nabla_x \left(v^2\varphi_{\eta}(x)\phi_{\eta}(v)\right) \,dxdv  \nonumber \\
  && +  \frac{1}{2}\iint_{\R^2 \times \R^2} \left(F^N(x,v)*f^N(t,x,v)\right) f^N(s,x,v)\cdot\nabla_v  \left(v^2\varphi_{\eta}(x)\phi_{\eta}(v)\right)\,dxdv  \nonumber \\
    && +\frac{1}{2}\iint_{\R^2 \times \R^2}G^N(x,v) f^N(t,x,v) \cdot\nabla_v  \left(v^2\varphi_{\eta}(x)\phi_{\eta}(v)\right) \,dxdv
		\end{eqnarray*}
		\begin{eqnarray*}
     &=& \frac{1}{2}\iint_{\R^2 \times \R^2}v^2f^N(t,x,v)\phi_{\eta}(v)v\cdot \nabla_x \big(\varphi_{\eta}(x)\big) \,dxdv  \nonumber \\
     && + \iint_{\R^2 \times \R^2} v\left(F^N(x,v)*f^N(t,x,v)\right) f^N(t,x,v) \varphi_{\eta}(x)\phi_{\eta}(v)\,dxdv \\
     && +  \frac{1}{2}\iint_{\R^2 \times \R^2} v^2\left(F^N(x,v)*f^N(t,x,v)\right) f^N(s,x,v)\cdot\nabla_v  \left(\varphi_{\eta}(x)\phi_{\eta}(v)\right)\,dxdv  \nonumber \\
      && +\iint_{\R^2 \times \R^2}v\cdot G^N(x,v) f^N(t,x,v) \varphi_{\eta}(x)\phi_{\eta}(v) \,dxdv  \\
      && +\frac{1}{2}\iint_{\R^2 \times \R^2}v^2G^N(x,v) f^N(t,x,v)\cdot \nabla_v  \left(\varphi_{\eta}(x)\phi_{\eta}(v)\right) \,dxdv \\
      &= : & \sum_{j=1}^5 I_j.
       \end{eqnarray*}
Next, we estimate the expressions $I_j, j=1,\ldots,5$ individually.  It is easy to see
    \begin{eqnarray*}
    	 |I_1| &\le &  \frac{1}{2}\iint_{\R^2 \times \R^2} \Big|v^2f^N(t,x,v)\phi_{\eta}(v)v\cdot \nabla_x \big(\varphi_{\eta}(x)\big) \Big| \,dxdv \\
    	 & \le & \frac{1}{2} \eta \iint_{\R^2 \times \R^2} |v|^3f^N(t,x,v) |\phi_{\eta}(v) \varphi_{\eta}(x) | \,dxdv.
    \end{eqnarray*}
Due to the fact that $f_0^N$ is compactly supported, i.e., $f^N$ has also compact support for any finite time $t$, $I_1$ converges to zero as $\eta \rightarrow 0 $ for fixed $N$. The same argument holds for $I_3$ and $I_5$, i.e., $I_3$ and $I_5$ converge to zero as $\eta \rightarrow 0 $:
    \begin{eqnarray*}
  |I_3| &\le &  \frac{1}{2}\cdot C \eta \|F^N*f^N\|_{L^\infty}  \iint_{\R^2 \times \R^2} v^2 f^N(t,x,v)  \varphi_{\eta}(x)\phi_{\eta}(v)\,dxdv\\
  		&\le & \frac{1}{2}\cdot C \eta \|F^N\|_{L^\infty} \|f^N\|_{L^1}  \iint_{\R^2 \times \R^2} v^2 f^N(t,x,v)  \varphi_{\eta}(x)\phi_{\eta}(v)\,dxdv\\
   I_5 &\le & \frac{1}{2}\cdot  \eta \|j_{\frac1N}*g\|_{L^\infty}  \iint_{\R^2 \times \R^2}v^2 f^N(t,x,v) \varphi_{\eta}(x)\phi_{\eta}(v) \,dxdv \\
    && -\frac{1}{2} \eta \iint_{\R^2 \times \R^2} |v|^3f^N(t,x,v) \phi_{\eta}(v) \varphi_{\eta}(x) \,dxdv.
      \end{eqnarray*}
      However, for the other integral estimates, we need some extra calculations. Using the properties of the desired velocity and direction acceleration  $G^N(x,v)$, we arrive at
    \begin{eqnarray*}
    I_2 &\le & \|F^N*f^N\|_{L^\infty} \iint_{\R^2 \times \R^2} \left(\frac{1}{4\varepsilon}+\varepsilon v^2 \right)f^N(t,x,v) \varphi_{\eta}(x)\phi_{\eta}(v)\,dxdv\\
    &\le & \|F^N\|_{L^\infty}\|f^N\|_{L^1} \iint_{\R^2 \times \R^2} \left(\frac{1}{4\varepsilon}+\varepsilon v^2 \right)f^N(t,x,v) \varphi_{\eta}(x)\phi_{\eta}(v)\,dxdv\\
    I_4 &\le & \|j_{\frac1N}*g\|_{L^\infty} \iint_{\R^2 \times \R^2} \left(\frac{1}{4\varepsilon}+\varepsilon v^2 \right)f^N(t,x,v) \varphi_{\eta}(x)\phi_{\eta}(v)\,dxdv   \\ && -\iint_{\R^2 \times \R^2} v^2 f^N(t,x,v)  \varphi_{\eta}(x)\phi_{\eta}(v)\,dxdv
    \end{eqnarray*}

       Combining all the five terms, taking $\eta$ to zero in the inequality above and setting $\varepsilon $ small enough such that
 $$\varepsilon < \frac{1}{2 (\|F^N\|_{L^\infty}\|f^N\|_{L^1}+\|g\|_{L^\infty})},$$
 where the fact that $\displaystyle \|j_{\frac1N}*g\|_{L^\infty} \leq \|g\|_{L^\infty}$ has been used,
 we end up with
   \begin{eqnarray*}
\frac{d}{dt}\iint_{\R^2 \times \R^2} \frac{1}{2}v^2f^N(t,x,v) \,dxdv \le  C-\iint_{\R^2 \times \R^2} \frac{1}{2}v^2f^N(t,x,v) \,dxdv,
  \end{eqnarray*}
  where $C$ does not depend on $N$.
A direct computation shows that the kinetic energy is bounded uniformly in $t$ and $N$. The estimate for the second moment follows from
\begin{eqnarray*}
	\frac{d}{dt}\iint_{\R^2\times \R^2}|x|^2f^N(t,x,v)\,dxdv &=& \iint_{\R^2\times \R^2}|x|^2\partial_t f^N(t,x,v)\,dxdv \\
	&=& \iint_{\R^2\times \R^2}x\cdot vf^N(t,x,v)\,dxdv\\
	&\leq &  \iint_{\R^2\times \R^2}(|x|^2+|v|^2)f^N(t,x,v)\,dxdv\\
	&\leq &  \iint_{\R^2\times \R^2}|x|^2f^N(t,x,v)\,dxdv+C.
\end{eqnarray*}

\end{proof}

\section{Compactness Arguments} \label{sec:compact}
In this section, we aim to achieve all the compactness arguments that are needed to pass the limit and to obtain the desired weak formulation of the non-cut-off kinetic equation, namely to prove the main result Theorem 2.1.

For given initial data $f_0$, let $f^N_0$ be a sequence of functions with compact support which are w.l.o.g. assumed to be in $B_N$, i.e., a ball of radius $N$ centered at the origin. Furthermore, $f^N_0$ satisfies
$$
\|f^N_0-f_0\|_{L^1(\R^2 \times \R^2)\cap L^\infty(\R^2 \times \R^2)}\to 0, \mbox{ as } N\to \infty.
$$
Let $f^N(t,x,v)$ be the solution obtained from Theorem \ref{thmexistN} with initial data $f^N_0(x,v)$.
Then, we know
$$
0 \leq f^N(t,x,v) \leq \Vert f_0 \Vert_{L^{\infty}(\R^{2}\times \R^{2})}e^{Ct}, \quad \hbox{for a.e.}  \,(x,v) \in \R^{2} \times \R^{2}, \,  t \geq 0,
$$
and for any fixed $T>0$, there exists a subsequence of $f^N$, still denoted by $f^N$ for simplicity, such that
$$
f^N  \overset {*}{\rightharpoonup } f \quad \hbox{in} \, L^{\infty}((0,T);L^{\infty}(\R^2 \times \R^2)).
$$
Due to the tightness in the variable $x$ and $v$ of the sequence $f^N$, implied from \eqref{kineticN} and \eqref{2momentN}, we conclude that $f \in L^1(\R^2 \times \R^2)$.
Moreover, we notice that the total mass is preserved, i.e.,
\begin{eqnarray*}
  \iint_{\R^2\times \R^2}f(t,x,v) \,dxdv=\iint_{\R^2\times \R^2}f^N_0(x,v) \,dxdv =: \mathcal{M}_0.
  \end{eqnarray*}
By the definition of weak* convergence for characteristic functions $\chi_{|x|+|v|\leq r} \in L^1(\R^2 \times \R^2)$, we have for each $a < b \in \R_+$
\begin{eqnarray*}
	&&\int_a^b \iint_{\R^2\times \R^2}\chi_{|x|+|v|\leq r}f(t, x, v)\, dxdvdt  \\
	&=&  \lim_{N \rightarrow \infty} \int_a^b\iint_{\R^2\times \R^2}\chi_{|x|+|v|\leq r}f^N(t, x,v)\,dxdvdt \\	
  &\le& \lim_{N \rightarrow \infty}\int_a^b\iint_{\R^2\times \R^2}f^N(t, x,v)\,dxdvdt =  \mathcal{M}_0(b-a).
  \end{eqnarray*}
  Letting $ r \rightarrow \infty$ and applying Fatou's lemma yields
  \begin{eqnarray*}
  &&\int_a^b \iint_{\R^2\times \R^2}f(t,x,v) \,dxdvdt  \\&\le& \varliminf_{r\rightarrow \infty}\int_a^b \iint_{\R^2\times \R^2}\chi_{|x|+|v|\leq r}f(t, x, v)\, dxdvdt \\
  &\le&  \lim_{N\rightarrow \infty}\int_a^b \iint_{\R^2\times \R^2}f^N(t,x,v) \,dxdvdt= \mathcal{M}_0(b-a).
  \end{eqnarray*}
By a similar argument for test functions of type $ \chi_{|x|+|v|\leq r}|v|^{2}$, we can show that 
$$
\int_{a}^{b}\iint_{\R^2\times \R^2}|v|^{2}f(t, x, v)\,dxdvdt \le  C(b-a)
$$
by using
$$
 \iint_{\R^{2}\times \R^{2}}\frac{1}{2}|v|^{2}f^N(t, x, v)\,dxdv \leq C(b-a), \quad \forall \, t \geq 0.
 $$
Since the above two inequalities hold for all $a<b \in \R_+$, they also hold for a.e. $t \in \R_{+}$.

Using all the estimates presented in Theorem \ref{thmexistN}, we are now ready to
 pass the limit in \eqref{vlasovN} to the desired weak formulation of the non-cut-off kinetic equation
\begin{eqnarray*}
\p_tf+v \cdot \nabla_xf+\nabla_v \cdot \left[(F*f)  f\right]+\nabla_v \cdot (G f)=0.
\end{eqnarray*}
However, we need to take special care on the non-linear term, i.e., the consideration of the function $F^N \ast f^N$.
In the following, we use the notation $L^p(L^q)$ to denote $L^p([0,T]; L^q(\R^2 \times \R^2)), 1\le p,q \le \infty$.
It is obvious to see that
\begin{eqnarray*}
&&\|F^N \ast f^N \|_{	L^{\infty}(L^1)} \\
&=&  \Big\| \iint_{\R^2\times \R^2} \Big( \iint_{\R^2\times \R^2}F^N(x-y, v-w) f^N(t,y,w)\,dydw \Big) \,dxdv \Big\|_{L^{\infty}([0,T])} \\
&=&  \Big\| \iint_{\R^2\times \R^2} f^N(t,y,w)\Big( \iint_{\R^2\times \R^2}F^N(x-y, v-w) \,dxdv \Big) \,dydw \Big\|_{L^{\infty}([0,T])} \\
&\le & C\left( \|F\|_{L^{1}}, \mathcal{M}_0, \bar{R} \right)
\end{eqnarray*}
and
\begin{eqnarray*}
  \|F^N \ast f^N \|_{L^{\infty}(L^{\infty})} &=& \Big\| \iint_{\R^2\times \R^2} F^N(x-y, v-w) f^N(t,y,w)\,dydw
  \Big\|_{L^{\infty}(L^{\infty})}\\
  &\le & C \left( \|F\|_{L^{\infty}}, \mathcal{M}_0 \right).
\end{eqnarray*}
Since $\nabla_v F^N$ is bounded uniformly in $N$, we get
\begin{eqnarray*}
  &&\|\nabla_v \left(F^N \ast f^N \right) \|_{L^{\infty}(L^1)}  \\
  &=&   \Big\|\iint_{\R^2\times \R^2} \Big( \iint_{\R^2\times \R^2} \nabla_v F^N(x-y, v-w) f^N(t,y,w)\,dydw \Big) \,dxdv \Big\|_{L^{\infty}(\R_+)} \\
  &=&   \Big\|\iint_{\R^2\times \R^2}  f^N(t,y,w)\Big( \iint_{\R^2\times \R^2} \nabla_v F^N(x-y, v-w)\,dxdv \Big) \,dydw \Big\|_{L^{\infty}(\R_+)} \\
&\le & C \left( \|\nabla_v F\|_{L^{1}}, \mathcal{M}_0, \bar{R} \right)
\end{eqnarray*}
and
\begin{eqnarray*}
  \|\nabla_v \left( F^N \ast f^N \right) \|_{L^{\infty}(L^{\infty})} &=& \Big\|\iint_{\R^2\times \R^2} \nabla_v F^N(x-y, v-w) f^N(t,y,w)\,dydw
  \Big\|_{L^{\infty}(L^{\infty})} \\
  &\le & C \left( \|\nabla_v F\|_{L^{\infty}}, \mathcal{M}_0 \right).
\end{eqnarray*}
So far, we can conclude by interpolation that $F^N \ast f^N$ and $\nabla_v F^N \ast f^N$ are in $L^{\infty}(L^{2})$.
Furthermore, it holds
$$
  \|\nabla_x \left(F^N \ast f^N\right) \|_{L^{\infty}(L^{2})} \leq C \cdot \Big\|  \left(\chi_{\bar{R}} \cdot \frac{1}{|x|}\right)  \ast f^N \Big\|_{L^{\infty}(L^{2})} \le \|f^N\|_{L^{\infty}(L^{p})},\quad \forall \, p >1,
$$
where $\chi_{\bar{R}} \cdot \frac{1}{|x|} \in L^{r}, \forall \, 1<r<2,$ and Young's inequality have been used.
Hence, we conclude that $F^N \ast f^N$ then belongs to $L^{\infty}(\R_+;W^{1,2}(\R^2\times \R^2))$.
Since
$$
\iint_{\R^2\times \R^2} \left(v f^N(t,x,v)\right)^2\,dxdv \le  \|f^N\|_{L^\infty}\|v^2f^N\|_{L^{\infty}(L^{1})} \le C(T),
$$
we can get for every $ \varphi \in C_0^{\infty}(\R^2\times \R^2) $ that
\begin{eqnarray}
  &&\Big\| \iint_{\R^2\times \R^2} v f^N(t,x,v) \nabla_x \varphi(x,v) \,dxdv\Big\|_{L^{\infty}(\R_+)} \nonumber \\
  &\le&  \|f^N\|^{\frac12}_{L^{\infty}(L^{\infty})}  \cdot \|v^2f^N\|^{\frac12}_{L^{\infty}(L^{1})} \cdot \|\nabla_x \varphi \|_{L^2} \nonumber \\
  &\le& C(T)\|\nabla_x \varphi \|_{L^2}.\label{vfN}
\end{eqnarray}
Moreover, we have
\begin{eqnarray} \label{GfN}
  && \Big\| \iint_{\R^2\times \R^2} G^N(x,v) f^N(t,x,v) \nabla_v \varphi(x,v) \,dxdv \Big\|_{L^{\infty}(\R_+)} \nonumber \\
  &\le&  \|j_{\frac1N}*g\|_{L^\infty}\cdot  \|f^N\|^{\frac12}_{L^{\infty}(L^{\infty})}  \cdot \|f^N\|^{\frac12}_{L^{\infty}(L^{1})}  \cdot \|\nabla_v \varphi \|_{L^2} \nonumber \\
  &&+\|f^N\|^{\frac12}_{L^{\infty}(L^{\infty})}  \cdot \|v^2f^N\|^{\frac12}_{L^{\infty}(L^{1})} \cdot \|\nabla_v \varphi \|_{L^2}\nonumber  \\
   &\le&  \|g\|_{L^\infty}\cdot  \|f^N\|^{\frac12}_{L^{\infty}(L^{\infty})}  \cdot \|f^N\|^{\frac12}_{L^{\infty}(L^{1})}  \cdot \|\nabla_v \varphi \|_{L^2} \nonumber \\
  &&+\|f^N\|^{\frac12}_{L^{\infty}(L^{\infty})}  \cdot \|v^2f^N\|^{\frac12}_{L^{\infty}(L^{1})} \cdot \|\nabla_v \varphi \|_{L^2}\nonumber  \\
  &\le& C(T)\|\nabla_v \varphi \|_{L^2}.
\end{eqnarray}
On the other hand, we know
\begin{eqnarray} \label{FNfNfN}
	 && \Big\|\iint_{\R^2 \times \R^2}\left(F^N*f^N\right)(t,x,v) \cdot f^N(t,x,v) \nabla_v \varphi(x,v)  \,dxdv  \Big\|_{L^{\infty}(\R_+)} \nonumber \\
	&\le&   \|F^N*f^N\|_{L^{\infty}(L^{\infty})}  \cdot \|f^N\|_{L^{\infty}(L^{2})}  \cdot \|\nabla_v \varphi \|_{L^2} \nonumber \\
  &\le& C\|\nabla_v \varphi \|_{L^2}.
\end{eqnarray}
Combining \eqref{vfN}-\eqref{FNfNfN}, it holds for every $\varphi \in C_0^{\infty} \left(\R^2 \times \R^2\right) $ that
\begin{eqnarray*}
	&& \Big\| \iint_{\R^2 \times \R^2}  \p_t f^N(t,x,v)\varphi(x,v)\,dxdv \Big\|_{L^{\infty}(\R_+)} \\
	 &\le& \Big\|\iint_{\R^2 \times \R^2} v f^N(t,x,v) \nabla_x \varphi(x,v)  \,dxdv  \Big\|_{L^{\infty}(\R_+)}\\
	 && + \Big\|\iint_{\R^2 \times \R^2}\left(F^N*f^N\right)(t,x,v) \cdot f^N(t,x,v) \nabla_v \varphi(x,v)  \,dxdv  \Big\|_{L^{\infty}(\R_+)} \\
	&& +\Big\| \iint_{\R^2 \times \R^2} G^N(x,v)f^N(t,x,v) \nabla_v \varphi(x,v)  \,dxdv  \Big\|_{L^{\infty}(\R_+)}  \\
	 &\le& C \|\varphi\|_{W^{1,2}} ,
\end{eqnarray*}
which implies
\begin{eqnarray*}
	&& \Big\| \iint_{\R^2 \times \R^2}  \p_t \Big((F^N \ast f^N)(t,x,v)\Big)\varphi(x,v)\,dxdv \Big\|_{L^{\infty}(\R_+)} \\
	 &=& \Big\| \iint_{\R^2 \times \R^2}  \p_t f^N(t,x,v) (F^N*\varphi)(x,v)\,dxdv \Big\|_{L^{\infty}(\R_+)} \\
	 &\le& C \|F^N*\varphi\|_{W^{1,2}}\\
	 &=& C  \Big\| \iint_{\R^2 \times \R^2}F^N(y,w)\varphi(x-y,v-w)\,dydw \Big\|_{W^{1,2}}\\
	 &\le& C\|F^N\|_{L^{\infty}} \|\varphi\|_{W^{1,2}}\\
	 &\le& C\|F\|_{L^{\infty}} \|\varphi\|_{W^{1,2}}
\end{eqnarray*}
or, in other words,
$$
\| \p_t (F^N \ast f^N)\|_{L^{\infty}(W^{-1,2})} =\| F^N \ast \p_t f^N\|_{L^{\infty}(W^{-1,2})} \le C.
$$
We then get 
$\forall \, \varphi \in C^{\infty}_0(\R^2 \times \R^2)$
$$
F^N*f^N \in L^{\infty}( [0,T];W^{1,2}(\Omega)),  \quad \p_t (F^N \ast f^N) \in L^{\infty}( [0,T];W^{-1,2}(\Omega)),
$$
where $\Omega=\hbox{supp} \varphi$. According to Aubin-Lions compact embedding theorem, e.g. \cite{CJL14,Simon86}, there exists a subsequence and $h \in L^{\infty}( [0,T];L^{2}(\Omega)) $ such that
  $$
F^N \ast f^N  \rightarrow  h   \quad \hbox{in} \, L^{\infty}( [0,T];L^{2}(\Omega)).
$$
It is not difficult to check that $h=F \ast f $.
Therefore we obtain the following estimates:
\begin{eqnarray*}
 &&  \Big|  \displaystyle\int_0^t\!\!\iint_{\R^2 \times \R^2} \!\!\!\Big(\left((F^N*f^N)f^N\right)(s,x,v)\nabla_v \varphi(x,v) - \big((F*f) f\big)(s,x,v)\nabla_v \varphi(x,v)\Big)\,dxdvds \Big|\\
 &=&\Big| \int_0^t\!\!\iint_{\Omega} \Big( \left((F^N*f^N)f^N\right)(s,x,v)\nabla_v \varphi(x,v)- \big((F*f) f^N\big)(s,x,v)\nabla_v \varphi(x,v) \\
 &&\hspace{0.5cm} +  \big((F*f) f^N\big)(s,x,v)\nabla_v \varphi(x,v) -  \big((F*f) f\big)(s,x,v)\nabla_v \varphi(x,v)\Big) \,dxdvds\Big|  \\
 &\le & \Big| \int_0^t\!\!\iint_{\Omega} \Big(  \left((F^N*f^N)f^N\right)(s,x,v)\nabla_v \varphi(x,v)- \big((F*f) f^N\big)(s,x,v)\nabla_v \varphi(x,v) \Big) \,dxdvds\Big|\\
&& \hspace{0.5cm}+\Big|\int_0^t\!\!\iint_{\Omega} \Big(\big((F*f) f^N\big)(s,x,v)\nabla_v \varphi(x,v) -  \big((F*f) f\big)(s,x,v)\nabla_v \varphi(x,v)\Big)
\,dxdvds\Big| \\
& =: &J_1+J_2.
\end{eqnarray*}
For the first term $J_1$, we have
$$ \lim_{N \rightarrow \infty} J_1 \le \lim_{N \rightarrow \infty} \| F^N*f^N-F*f \|_{L^{\infty}(L^2(\Omega))} \| f^N\|_{L^{\infty}(L^{\infty})}\| \nabla_v \varphi \|_{L^2}=0$$
while for the second term $J_2$ we use the fact that $f^N  \overset {*}{\rightharpoonup } f$ in $L^{\infty}(\R_+;L^{\infty}(\R^2 \times \R^2))$ for $F*f \cdot \nabla_v \varphi \in L^1(L^1)$, namely
$$ \lim_{N \rightarrow \infty} J_2 =0.$$

Finally, we have to examine the initial data. Since $f^N$ is the weak solution to the cut-off mean field equation \eqref{vlasovN}, it obviously satisfies
\begin{eqnarray*}
 &&\iint_{\R^2 \times \R^2} f^N(t,x,v) \varphi(x,v)
  \,dxdv = \iint_{\R^2 \times \R^2}f^N_0(x,v) \varphi(x,v)
  \,dxdv \nonumber \\
  && \hspace{16mm} +\int_0^t \iint_{\R^2 \times \R^2}vf^N(s,x,v) \cdot\nabla_x \varphi(x,v)
  \,dxdvds  \nonumber \\
  && \hspace{16mm}+\int_0^t \iint_{\R^2 \times \R^2} \left(F^N(x,v)*f^N(s,x,v)\right) f^N(s,x,v)\cdot\nabla_v \varphi(x,v)\,dxdvds  \nonumber \\
    &&\hspace{16mm}+ \int_0^t \iint_{\R^2 \times \R^2}G^N(x,v) f^N(s,x,v)\cdot \nabla_v \varphi(x,v) \,dxdvds
  \end{eqnarray*}
for any test function $ \varphi(x,v) \in C^{\infty}_0(\R^2 \times \R^2)$.
We recall	
$$
\|f^N_0-f_0\|_{L^1(\R^2 \times \R^2)\cap L^\infty(\R^2 \times \R^2)}\to 0, \mbox{ as } N\to \infty,
$$
and that terms on the right (second till last) hand side are uniformly continuous in time $t$.
Then, taking limit $t \rightarrow 0^+$ on both sides of the above equation verifies the initial data.

\section{Summary}
This paper deals with the core problem, which is to show existence of the $L^{\infty}( (0,\infty);$ $L^{\infty}(\R^2\times \R^2))$-solution to the mean field kinetic equation for interacting particle systems with non-Lipschitz force. Our main results, Theorem \ref{thm:1} and Theorem \ref{thmexistN}, state that there exists a weak solution to the mean field equation (or approximate equation with cut-off) to the interaction flow model. The solution is proven to satisfy the mass conservation and energy bounds, respectively. In particular, this paper addresses technical difficulties caused by the non-Lipschitz continuous interaction force and self-generating force.

\section*{Acknowledgments}
This work was financially supported by the DAAD project “DAAD-PPP VR China”
(Project-ID: 57215936) and the Deutsche Forschungsgemeinschaft (DFG) Grant CH 955/4-1.


\begin{thebibliography}{99}


\bibitem{Bellomo2011}
Bellomo, N., Dogbé, C. (2011). On the Modeling of Traffic and Crowds: A Survey of Models,
Speculations, and Perspectives. SIAM Review, 53(3), 409-463.

\bibitem{Bellomo2012}
Bellomo, N., Piccoli, B., Tosin, A. (2012). Modeling crowd dynamics from a complex system
viewpoint. Mathematical Models and Methods in Applied Sciences, 22, 1230004.

\bibitem{BP15}
Boers, N., Pickl, P. (2015). On mean field limits for dynamical systems. Journal of Statistical Physics, 164(1), 1-16.

\bibitem{BH77}
Braun, W., $\&$ Hepp, K. (1977). The Vlasov dynamics and its fluctuations in the $1/N$ limit of
interacting classical particles. Communications in Mathematical Physics, 56(2), 101-113.

\bibitem{CGY16}
Chen, L., G\"ottlich, S., $\&$ Yin, Q. (2016). Mean Field Limit and Propagation of Chaos for a Pedestrian Flow Model.
Journal of Statistical Physics, 166(2), 211-229.

\bibitem{CJL14}
Chen, X., J\"ungel, A., $\&$ Liu, J. G. (2014). A note on Aubin-Lions-Dubinskii lemmas. Acta Applicandae Mathematicae, 133(1), 33-43.

\bibitem{Colombo2008}
Colombo, R., Garavello, M., Lécureux-Mercier, M. (2012). A class of nonlocal models for pedestrian traffic.
Mathematical Models and Methods in Applied Sciences, 22(4), 1150023.

\bibitem{Cristiani2011}
Cristiani, E., Piccoli, B., Tosin, A. (2011). Multiscale modeling of granular flows with application to crowd dynamics.
Multiscale Modeling \& Simulation 9(1), 155–182.

\bibitem{Degond2013}
Degond, P., Appert-Rolland, C., Moussaid, M., Pettré, J., Theraulaz, G. (2013). A hierarchy of
heuristic-based models of crowd dynamics. Journal of Statistical Physics, 152(6), 1033-1068.

\bibitem{DiFrancesco2011}
Di Francesco, M., Markowich, P., Pietschmann, J.-F., Wolfram, M.-T. (2011).
On the Hughes' model for pedestrian flow: the one-dimensional case. Journal of Differential Equations, 250(3), 1334-1362.

\bibitem{Dobrushin79}
Dobrushin, R. L. V. (1979). Vlasov equations. Functional Analysis and Its Applications, 13(2), 115-123.

  \bibitem{EGKT14}
Etikyala, R., G\"ottlich, S., Klar, A., \& Tiwari, S. (2014). Particle methods for pedestrian flow models: from microscopic
to nonlocal continuum models. Mathematical Models and Methods in Applied Sciences, 24(12), 2503-2523.

\bibitem{Golse13}
Golse, F. (2013). Mean Field Kinetic Equations. http://www.cmls.polytechnique.fr/perso/golse/M2/PolyKinetic.pdf

\bibitem{GHSSV14}
G\"ottlich, S., Hoher, S., Schindler, P., Schleper, V., \& Verl, A. (2014). Modeling, simulation and validation of material flow on conveyor belts. Applied mathematical modelling, 38(13), 3295-3313.

\bibitem{GKT15}
G\"ottlich, S., Klar, A., \& Tiwari, S. (2015). Complex material flow problems: a multi-scale model hierarchy and particle methods. Journal of Engineering Mathematics, 92(1), 15-29.

  \bibitem {HJ07}
Hauray, M., Jabin, P. E. (2007). N-particles approximation of the Vlasov equations with singular potential.
Archive for Rational Mechanics and Analysis, 183(3), 489-524.

   \bibitem{HJ11}
Hauray, M., Jabin, P. E. (2015). Particles approximations of Vlasov equations with singular forces:
Propagation of chaos.  Annales Scientifiques de l'\'Ecole Normale Sup\'erieure. Quatri\`eme S\'erie, 48(4), 891–940.

  \bibitem{HM95}
Helbing, D., Molnar, P. (1995). Social force model for pedestrian dynamics. Physical Review E, 51(5), 4282.

\bibitem{Hughes2001}
Hughes, R.L. (2002). A continuum theory for the flow of pedestrians. Transportation Research Part B: Methodological,
36(6), 507-535.

  \bibitem{Lewin51}
Lewin, K. (1951). Field theory in social science: selected theoretical papers (Edited by Dorwin Cartwright). Harper and
Brothers New York.

 \bibitem{LP91}
    Lions, P. L., Perthame, B. (1991). Propagation of moments and regularity for the 3-dimensional Vlasov-Poisson system. Inventiones Mathematicae, 105(1), 415-430.


\bibitem{Naldi2010}
Naldi, G., Pareschi, L. Toscani, G. (2010).
Mathematical modeling of collective behavior in socio-economic and life sciences. Birkhäuser Boston.


\bibitem{Pfaffelmoser92}
Pfaffelmoser, K. (1992). Global classical solutions of the Vlasov-Poisson system in three dimensions for general initial data. Journal of Differential Equations, 95(2), 281-303.


 \bibitem{Philip2007}
 Philipowski, R. (2007). Interacting diffusions approximating the porous medium equation and propagation
 of chaos. Stochastic Processes and their Applications, 117(4), 526-538.

\bibitem{Piccoli2009}
Piccoli, B., Tosin A. (2009). Pedestrian
flows in bounded domains with obstacles. Continuum Mechanics and Thermodynamics, 21(2), 85-107.

\bibitem{Rein98}
Rein, G. (1998). Growth Estimates for the Solutions of the Vlasov-Poisson System in the Plasma Physics Case. Mathematische Nachrichten, 191(1), 269-278.

\bibitem{Schaeffer91}
Schaeffer, J. (1991). Global existence of smooth solutions to the Vlasov Poisson system in three dimensions. Communications in Partial Differential Equations, 16(8-9), 1313-1335.

\bibitem{Simon86}
Simon, J. (1986). Compact sets in the space $L^p (0, T; B)$. Annali di Matematica Pura ed Applicata, 146(1), 65-96.

\bibitem{Sophn12}
Spohn, H. (2012). Large scale dynamics of interacting particles. Springer Science and Business Media.

\bibitem{Sznitman91}
Sznitman, A. S. (1991). Topics in propagation of chaos. In Ecole d'été de probabilités de Saint-Flour XIX—1989
(pp. 165-251). Springer Berlin Heidelberg.

\bibitem{Toscani2006}
Toscani, G. (2006). Kinetic models of opinion formation. Communications in Mathematical Sciences, 4(3), 481–496.

\bibitem{Twarogowska2014}
Twarogowska, M., Goatin, P., Duvigneau, R. (2014).
Macroscopic modeling and simulations of room evacuation, Applied Mathematical Modelling, 38(24), 5781-5795.

\bibitem{Zhang2012}
Zhang, J., Klingsch, W., Schadschneider, A., Seyfried, A. (2012).
Ordering in bidirectional pedestrian flows and its influence on the fundamental diagram. Journal of Statistical Mechanics, P02002.

\end{thebibliography}
\end{document}